\renewcommand{\labelenumi}{{\sl (\roman{enumi})}}
\newtheorem{theorem}{Theorem}
\newtheorem{proposition}[theorem]{Proposition}
\theoremstyle{remark}
\newtheorem{ex}[theorem]{Example}
\theoremstyle{definition}
\newtheorem{definition}[theorem]{Definition}
\newcommand{\cb}    [1]{\ensuremath{\left  \{      #1  \right \}       }}
\newcommand{\of}    [1]{\ensuremath{\left (        #1  \right )        }}
\newcommand{\st} {\ensuremath{|\;}}
\newcommand{\Int} {{\rm int \,}}
\newcommand{\conv}  {{\rm conv \,}}
\newcommand{\cone}{{\rm cone\,}}
\DeclareMathOperator*{\Min}{ Min}
\newcommand{\I}{\mathcal{I}}
\newcommand{\D}{\mathcal{D}}
\renewcommand{\P}{\mathcal{P}}
\newcommand{\R}{\mathbb{R}}
\newcommand{\smz}{\!\setminus\!\{0\}}
\newcommand*{\bensolve}{{\em Bensolve}}
\newcommand*{\oldsolve}{{\em Bensolve~v1.2}}
\newcommand*{\leqc}{\mathbin{\leqslant_C}}
\newcommand*{\sclobj}{c}
\newcommand*{\sclrstr}{\rstr}
\newcommand*{\rstr}{S}
\DeclareMathOperator{\rank}{rank}
\journal{ejor}
\begin{document}
\begin{frontmatter}
  
  \title{The vector linear program solver {\sl Bensolve} -- notes on
    theoretical background}
  \author{Andreas Löhne}
  \ead{andreas.loehne@uni-jena.de}
  \address{Friedrich Schiller University Jena\\ Department of
    Mathematics\\ 07737 Jena\\ Germany}
  \author{Benjamin Weißing\corref{cor1}}
  \ead{benjamin.weissing@mathematik.uni-halle.de}
  \address{Martin Luther University Halle--Wittenberg\\ Department of
    Mathematics\\ 06099 Halle (Saale)\\ Germany}
  \cortext[cor1]{corresponding author}

  \begin{keyword}
    vector linear programming \sep linear vector optimization \sep
    multiple objective optimization
    \MSC[2010] 90C29 \sep 90C05 \sep 52B55 \sep 15A39
  \end{keyword}
  \begin{abstract} 
    \bensolve\ is an open source implementation of Benson's algorithm and
    its dual variant. Both algorithms compute primal and dual solutions of
    vector linear programs (VLP), which include the subclass of multiple
    objective linear programs (MOLP). The recent version of \bensolve\ can
    treat arbitrary vector linear programs whose upper image does not
    contain lines. This article surveys the theoretical background of the
    implementation. In particular, the role of VLP duality for the
    implementation is pointed out. Some numerical examples are
    provided. In contrast to the existing literature we consider a
    less restrictive class of vector linear programs.
  \end{abstract}
\end{frontmatter}

\section{Introduction}
Let us start with the formulation of a {\em multiple objective linear
  program}, which is a special case of a {\em vector linear program}. Given
positive integers $n, m, q$ and data of the form
\begin{align*}
  &P \in \R^{q \times n}\text{,}\\
  &B \in \R^{m \times n},
  a \in \bigl(\R\cup\{-\infty\}\bigr)^{m\times 1}, b \in \bigl(\R\cup\{+\infty\}\bigr)^{m
    \times 1}\text{,}\\
  \intertext{and}
  &l \in \bigl(\R\cup\{-\infty\}\bigr)^{n\times 1},
  s \in \bigl(\R\cup\{+\infty\}\bigr)^{n\times 1} \text{,}
\end{align*}
we consider the problem
\begin{equation}\label{molp}
  \tag{MOLP} \min Px \quad \text{ subject to }\quad a \leq B x \leq
  b,\quad l \leq x \leq s.
\end{equation}
Minimization is understood with respect to the component-wise ordering
in $\R^q$.

In the more general setting of a vector linear program, one
considers a partial ordering $\leq_C$ generated by a polyhedral convex
ordering cone $C\subseteq \R^q$ that does not contain lines, or
equivalently in this setting, is pointed. For vectors $y,z \in \R^q$,
we define
\begin{equation*}\label{ordering}
  y \leq_C z \quad\mathrel{\mathop:}\iff\quad z-y \in C.
\end{equation*}
The polyhedral cone $C$ can be given either by a matrix $Y \in
\R^{q\times o}$ as
\begin{equation}\label{rep_cone}
  C=\cb{y \in \R^q\st \exists v \in \R^o: y = Y v,\; v \geq 0},
\end{equation}
or by a matrix $Z\in \R^{q\times p}$ as
\begin{equation}\label{rep_dualcone}
  C=\cb{y \in \R^q\st Z^T y \geq 0}.
\end{equation}
Throughout, \eqref{rep_cone} is referred to as
\texttt{cone}-representation and \eqref{rep_dualcone} is called
\texttt{dualcone}-representation. This can be motivated as $C$ is
generated by the columns of the matrix $Y$ (in the sense of
\eqref{rep_cone}), whereas the dual cone
$$C^*:=\cb{w \in \R^q \st \forall y \in C: y^T w \geq 0}$$ of $C$ is
generated by the columns of the matrix $Z$. The latter statement is a
consequence of Farkas' lemma. The resulting vector linear program is
\begin{equation}\label{vlp}
  \tag{VLP} \textstyle{\min_C} Px \quad \text{ subject to }\quad a
  \leq B x \leq b,\quad l \leq x \leq s.
\end{equation}

The recent version of {\sl Bensolve} assumes further that the cone $C$
is {\em solid}, i.e., it has dimension $q$, or equivalently, nonempty
interior. This assumption is equivalent to $\rank Y = q$. Further, $C$
is free of lines if and only if $\rank Z = q$. This implies that
necessarily $o \geq q$ and $p \geq q$.

We close this section with some bibliographical remarks. It seems that
Dauer \cite{Dauer87} and Dauer \& Liu \cite{DauLiu90} started to
propagate the objective space approach in multiple objective linear
programming, saying that on the one hand it is more efficient, and on
the other hand it is sufficient in practice to generate only the
minimal (or non-dominated) vertices in the objective space (or image
space). The algorithms of {\sl Bensolve} are based on the papers by
Benson \cite{Benson98a, Benson98} from 1998. Since then, several
extensions, simplifications and improvements have been published, among
them the introduction of a dual algorithm in \cite{EhrLoeSha12}, the
extension to the unbounded case in \cite{Loehne11}, approximate
variants in \cite{ShaEhr08}, the extension to pointed solid polyhedral
ordering cones in \cite{HamLoeRud13}, a simplification which requires
only one LP per iteration (independently) in \cite{Csirmaz15} and
\cite{HamLoeRud13}, and a complexity analysis in
\cite{BoeMut15}. Recently, it was shown that {\sl Bensolve} can be
used to compute projections of polyhedra \cite{LoeWei15}. In contrast to the literature, see e.g.\ \cite{HamLoeRud13}, we make weaker assumptions: The geometric duality parameter vector $c\in \R^q$ is required to satisfy $c_q\neq 0$ rather than $c_q = 1$. The choice of ordering cones is therefore less restricted. Moreover, we derive geometric dual programs for maximization problems as well as for problems with box constraints.

\section{Solution concepts}
In contrast to most classical textbooks on vector optimization and
multiple objective programming, \bensolve\ is based on a
recent solution concept, which has been introduced in \cite{HeyLoe11}
and \cite{Loehne11}.  This concept entails the notions of minimality
\emph{and} infimum attainment, which are equivalent in the scalar
linear programming case, but do diverge with increasing image space
dimension.  While in the classical literature essentially only
minimality is taken into account, we will show in the following that
\emph{both} conceptions are essential for a proper solution concept.
To this end, we start with the classical scalar theory and extend it
to the case of multiple image space dimensions.\par

Consider the scalar linear program
\begin{equation}\tag{LP}
  \label{eq.linprog}
  \min x\mapsto \sclobj x\quad\text{s.t. $x \in \sclrstr$,}
\end{equation}
where $\sclrstr \subseteq \R^n$ is some feasible set defined by linear
inequalities.  Finding a \emph{solution} to \eqref{eq.linprog} means
to find a feasible variable $\bar{x}\in\sclrstr$ whose image $\sclobj
\bar{x}$ equals the minimum of the set of objective values
$\left\{\sclobj x\mid x\in \sclrstr
\right\}=\mathrel{\mathop:}c\left[X\right]$.\par
The generalization of minimality to higher image space dimension with
respect to the ordering induced by $C$ is a standard notion:
\begin{definition}
  A point $y \in A\subseteq \R^q$ is called \emph{$C$-minimal} in $A$,
  if for every point $\tilde{y} \in A$ the following implication
  holds:
  \begin{equation*}
    \tilde{y}\leqc y \Rightarrow \tilde{y}=y\text{.}
  \end{equation*}
  The set of all $C$-minimal points of a set $A$ is denoted by $\Min_C
  A$.\par
\end{definition}
The generalization of the term 'solution', however, is not
so obvious. Likewise to the scalar case, a solution to a problem
should be a single entity. But, a single minimizer is \emph{not} an
appropriate solution, even though it is referred to as \emph{efficient
  solution} in the classical literature. The reason is that a single
minimizer can be obtained by solving a scalarized problem, which in
the present setting is usually a linear program. The problem to
determine a single minimizer is therefore a matter of scalar
optimization. Thus, the \emph{single entity} mentioned above should be
a \emph{set} of minimizers.\par

The problem to compute \emph{all} extreme minimizers has been investigated,
see \cite{EvaSte73, Steuer89}, but for many applications, this
approach does not seem to be tractable.  This can be seen already by
considering the scalar case:  The problem to compute all extreme
minimizers (i.e.\ a representation of all solutions) of a linear
program is NP hard: Consider the feasibility problem. This leads to
vertex enumeration, which is NP hard, see e.g.\ \cite[Proposition
5]{Dyer}.\par
Dauer \cite{Dauer87} and many followers aimed at characterizing
\emph{every} minimal element in the \emph{objective space}. A
corresponding set of minimizers is what we understand to be a
\emph{solution} to a vector linear program.  Surprisingly this idea can
be defined via \emph{infimum attainment}.\par

Let us consider the scalar case first.  An alternative
characterization of a solution for \eqref{eq.linprog}, which does not
rely on minimality, is provided by means of infimum attainment: The
image $c\bar{x}$ of a solution $\bar{x}$ to \eqref{eq.linprog} is the
infimum of the image of the feasible set $c[S]$, which may be expressed
equivalently by
\begin{equation}\label{eq.scal_inf}
    \left\{\sclobj \bar{x}\right\}+\R_{{}+{}} \subseteq \sclobj\left[\sclrstr\right]+\R_{{}+{}}\text{.}
\end{equation}
The generalization of the right hand side of inequality
\eqref{eq.scal_inf} in the case of multiple image space dimensions
features prominently in the solution concept for \eqref{vlp} and is
therefore denoted by a dedicated term:
\begin{definition}
  The \emph{upper image} of \eqref{vlp} is the Minkowski sum of the
  image $P[S]$ of the feasible set $\rstr$ and the ordering cone $C$:
  \begin{equation}
    \P \mathrel{\mathop:}= P\left[S \right] + C\text{.}
  \end{equation}
  The \emph{lower image}, in contrast, is obtained by adding the
  negative of the ordering cone to the image:
  \begin{equation}
    P\left[S \right] - C\text{.}
  \end{equation}
  As a generic term for both notions we use \emph{extended image}.
\end{definition}
The upper image can be understood as an infimum, too: It serves as
infimum in a complete lattice which embeds the image space $\R^q$.  As the
theoretical details are beyond the scope of the present article, the
interested reader is referred to \cite{HeyLoe11, Loehne11} for a
thorough discussion.  Here, we focus on the practical implications the
consideration of the upper image yields.  Observe that the right hand
side of inequality \eqref{eq.scal_inf} describes all the information
in the image space that matters for a solution to \eqref{eq.linprog}:
While the image $\sclobj [\sclrstr ]$ may be a closed interval
$\left[\sclobj \bar{x} , y\right]$, the actual value of $y$ or even
the existence of $y$ is not of interest for a minimization problem.
Adding the ordering cone ($\R_{{}+{}}$ in the case of scalar
minimization) to the image $c[S]$ of the feasible set conveys this
fact. Before showing that this feature of the upper image is retained
in higher image space dimensions (Proposition \ref{prop_1}), we
provide the notion of minimal directions in order to be able to
describe the upper image geometrically. Hitherto we recapitulate that
polyhedra may be represented by different means:
\begin{enumerate}
\item
  In terms of finitely many points and directions, called V-representation.
\item
  As intersection of finitely many affine halfspaces, called
  H-representation.
\end{enumerate}
\begin{definition}
  $y^h \in \R^q\smz$ is a \emph{$C$-minimal direction} in
  $A\subseteq\R^q$ if for some $y\in A$ the point $y+\mu y^h$ is
  $C$-minimal in $A$ for every scalar $\mu\geq 0$.
\end{definition}
Minimizers now can be defined as feasible elements generating minimal
points and minimal directions. We set $\ker P := \left\{ x \in \R^n \st Px = 0 \right\}$.
\begin{definition} [Feasibility]\leavevmode\par
  A \emph{point} $\bar{x} \in \R^n$ is called \emph{ feasible} for
  \eqref{vlp} if
  \begin{equation*}
    \bar{x}\in S \mathrel{\mathop:}= \left\{ x\in \R^n \mid a\leq Bx
    \leq b\,\text{,}\; l\leq x \leq s\right\}\text{.}
  \end{equation*}\par
  A \emph{direction} $\bar{x}^h \in \R^n \setminus \ker P$ is called
  \emph{feasible} for \eqref{vlp} if
  \begin{equation*}
    \bar{x}^h \in S^h \mathrel{\mathop:}= \left\{ x\in \R^n \mid
    0\cdot a\leq Bx \leq 0\cdot b\,\text{,}\; 0\cdot l\leq x \leq
    0\cdot s\right\}\text{,}
  \end{equation*}
  where we define $0 \cdot \pm\infty = \pm\infty$.\par
\end{definition}
\begin{definition}[Minimizer]
  A feasible point $x \in S$ is called \emph{minimizer}
  for \eqref{vlp} if its image $Px$ is $C$-minimal in
  $P[S]$. Likewise, a direction $x^h \in S^h\setminus \ker P $ is
  called \emph{minimizer} for \eqref{vlp} if $Px^h$ is a
  $C$-minimal direction in $P[S]$.
\end{definition}
An infimizer of \eqref{vlp} is a pair of sets that generates the
upper image.  For a set $B\subseteq\R^q$, $\conv B$ denotes its convex
hull.  The cone generated by a non-empty set $B\subseteq\R^q$ is
denoted by $\cone B\mathrel{\mathop{:}}=\left\{tx\st x\in
B,\,t\geqslant 0 \right\}$.
We set $\cone\emptyset\mathrel{\mathop{:}}=\{0\}$.
\begin{definition}[Infimizer]\label{dfn.infmsr}
  A pair $(\bar S, \bar S^h)$, where $\bar S$ is a nonempty finite set
  of feasible points and $\bar S^h$ is a finite set of feasible
  directions is called a {\em finite infimizer} of \eqref{vlp} if
  \begin{equation}\label{inf_att}
    \conv P[\bar S] + \cone P[\bar S^h] + C \supseteq P[S] + C.
  \end{equation}	
\end{definition}
We note here that \emph{minimality} is a kind of ``local'' property,
whereas \emph{infimum attainment} characterizes the relevant parts of
the image of \eqref{vlp} ``as a whole.''  A solution to \eqref{vlp}
combines both conceptions:
\begin{definition}[Solution]
  A finite infimizer $(\bar S, \bar S^h)$ is said to be a {\em
    solution} of \eqref{vlp} if the sets $\bar S$ and $\bar S^h$
  consist of minimizers only.
\end{definition}
It is imperative, of course, that no minimal point is lost during the
transition from the image $P\left[S \right]$ to the upper image $\P$.
This is ensured by the following proposition:
\begin{proposition} \label{prop_1}
  Suppose $\P$ contains a vertex.  Then
  \begin{enumerate}
  \item $\Min_C \P = \Min_C P[S]$.
  \item Every vertex of $\P$ is $C$-minimal.
  \item An extreme direction of $\P$ is $C$-minimal if and only if it
    does not belong to $C$.
  \end{enumerate}	
\end{proposition}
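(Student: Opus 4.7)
The plan is to prove the three parts separately, using only pointedness of $C$, the recession identity $\P + C = \P$ (which yields $C \subseteq \recc \P$), and the equivalence between $\P$ having a vertex and $\P$ being line-free. For part (i) I would prove both inclusions directly. If $y \in \Min_C \P$, write $y = Px + c$ with $x \in S$, $c \in C$; then $Px \leqc y$ with $Px \in \P$, so minimality forces $c = 0$ and $y = Px \in P[S]$, and minimality in $P[S]$ then follows from $P[S] \subseteq \P$. Conversely, if $y \in \Min_C P[S]$ and $\tilde y = Px + c \in \P$ satisfies $\tilde y \leqc y$, chaining $Px \leqc \tilde y \leqc y$ and invoking minimality in $P[S]$ gives $Px = y$; then both $c \in C$ and $-c \in C$, so pointedness of $C$ forces $c = 0$ and $\tilde y = y$.

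For part (ii), let $v$ be a vertex of $\P$ and suppose some $\tilde y \in \P$ satisfies $\tilde y \leqc v$ with $\tilde y \neq v$. Then $d \mathrel{\mathop:}= v - \tilde y \in C \smz$, and because $C \subseteq \recc \P$ also $v + d \in \P$. The identity $v = \tfrac12 \tilde y + \tfrac12 (v + d)$ expresses $v$ as a strict convex combination of two distinct points of $\P$, contradicting the vertex property.

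For part (iii), the implication ``$C$-minimal direction $\Rightarrow d \notin C$'' is immediate: if $d \in C$, then for every $y \in \P$ the point $y + d \in \P$ satisfies $y \leqc y + d$ with $y \neq y + d$, so $y + d$ fails to be $C$-minimal. For the converse, I would invoke the standard polyhedral fact that, since $\P$ is line-free and $d$ is an extreme direction, there exists a vertex $v$ of $\P$ such that $F = v + \R_+ d$ is a one-dimensional face of $\P$. Being a face of the polyhedron $\P$, $F$ is exposed: pick $\ell \in (\R^q)^*$ with $\ell(y) \geq \ell(v)$ for all $y \in \P$ and equality precisely on $F$. Boundedness of $\ell$ from below on $\P$ forces $\ell \geq 0$ on $\recc \P \supseteq C$, hence $\ell \in C^*$, and $\ell(d) = 0$ since $v + d \in F$. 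Now for any $\mu \geq 0$ and any $\tilde y \in \P$ with $\tilde y \leqc v + \mu d$, applying $\ell$ yields $\ell(\tilde y) \leq \ell(v + \mu d) = \ell(v)$, forcing $\tilde y \in F$, so $\tilde y = v + \mu' d$ for some $\mu' \geq 0$. Then $(\mu - \mu') d \in C$: if $\mu > \mu'$ we contradict $d \notin C$, and if $\mu < \mu'$ we obtain $-d \in C \subseteq \recc \P$ alongside $d$, producing a line in $\P$ and contradicting the vertex hypothesis. Hence $\mu = \mu'$, every $v + \mu d$ is $C$-minimal, and $d$ is a $C$-minimal direction.

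The main obstacle I anticipate lies in this last converse step: producing the edge $v + \R_+ d$ from the extreme-direction hypothesis requires a nontrivial piece of polyhedral theory, and ruling out the case $\mu < \mu'$ is precisely where the no-lines consequence of the vertex assumption becomes indispensable — so this is the point that most tightly ties together the three hypotheses of the proposition.
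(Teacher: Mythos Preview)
Your proof is correct. Parts (i) and (ii) spell out what the paper dismisses as ``well-known and straight-forward,'' and your arguments there are the standard ones. For part (iii), both you and the paper start from the same key fact --- that an extreme direction $d$ of the pointed polyhedron $\P$ spans a one-dimensional face $F$ --- and then argue that any $\tilde y \in \P$ dominating a point of $F$ must itself lie in $F$, after which the conclusion follows by a short case analysis using line-freeness.

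The only technical difference is \emph{how} you force $\tilde y \in F$. You pick an exposing functional $\ell \in C^*$ for $F$ and combine $\ell(\tilde y) \leq \ell(v+\mu d) = \ell(v)$ with $\ell \geq \ell(v)$ on $\P$. The paper instead writes the face point $y + t\bar y$ as the convex combination $\lambda\tilde y + (1-\lambda)\bigl(\tilde y + \tfrac{1}{1-\lambda}c\bigr)$ of two points of $\P$ and appeals directly to the \emph{definition} of a face, obtaining both $\tilde y \in F$ and $\tilde y + \tfrac{1}{1-\lambda}c \in F$ for every $\lambda \in (0,1)$; this immediately forces $c$ to be parallel to $\bar y$, without a separate parametrization step. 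The paper's route is a touch more self-contained (it needs neither the exposed-face theorem nor that the base point of $F$ be a vertex), while yours makes the role of the dual cone $C^*$ explicit. Substantively the two arguments coincide.
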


\begin{proof}\leavevmode
  The first two statements are well-known and can be proven
  straight-forward.  To prove \labelenumi, first note that no $c \in
  C\smz$ may serve as a minimal direction, as for any $y\in \P$ we
  have
  \begin{equation*}
    y \leqc y+c \quad\text{and}\quad y\neq y+c\text{.}
  \end{equation*}
  Now let an extremal direction $\bar{y} \in \P$ be given, that is,
  there exists $y\in \P$ such that
  $F\mathrel{\mathop:}=\left\{y+t\bar{y}\mid t\geq 0\right\}$ is a $1$-dimensional face of $\P$.  Now consider an
  element $\tilde{y}\in \P$ with $\tilde{y}\leqc y+t\bar{y}$ for given
  $t\geq 0$, which means that there exists $c \in C$ with
  $\tilde{y}+c=y + t\bar{y}$.  For any given $\lambda \in (0,1)$ we
  have $\tilde{y}+\frac{1}{1-\lambda}c\in\P$.  From $y+t\bar{y}
  = \lambda\tilde{y}+\left(1-\lambda\right)\left(\tilde{y}+\frac{1}{1-\lambda}c\right)\in
  F$  we conclude $\tilde{y}\in F$ and $\tilde{y}+\frac{1}{1-\lambda}c \in
  F$.  This means either $\bar{y}$ is a nonnegative (as $\P$ does not
  contain lines) multiple of $c$ or $c=0$.  Therefore, if
  $\bar{y}\notin C$, $y + t\bar{y}$ is $C$-minimal for every $t\geq
  0$, proving minimality of $\bar{y}$.
\end{proof}

The preceding proposition shows that for obtaining a solution to
\eqref{vlp} it is sufficient to find the vertices $Px$ and minimal
extreme directions $Px^h$ of the upper image as well as preimages
$x\in S, x^h\in S^h$ which generate them.  In fact, this is how
\bensolve\ works: A vertex representation of the upper image is found
along with corresponding preimages for the vertices and minimal
directions.  Actually, \bensolve\ does even more: The V-representation
found by \bensolve\ is \emph{irreducible}, that is, if any element of
the solution is left out, it will not generate the upper image
$\mathcal{P}$ in the sense of \eqref{dfn.infmsr}.\par
Those extremal directions in
the V-representation which are not minimal (namely those belonging to
$C$) are not a part of the solution, but nevertheless they are
necessary for a complete description of the upper image.  To
emphasize their significance, these directions are labeled
specifically:
\begin{definition}
  The set of extreme directions of $\P$ belonging to $C$ is called
  \emph{cone compartment}.
\end{definition}

\section{Dual problem and dual solutions}

Duality plays an important role for {\sl Bensolve}.  For the user of
the software the following aspects are important:

\begin{enumerate}
	\item The extended image $\D$ of the dual problem contains
          information about the extended image $\P$ of the primal
          problem: {\sl Bensolve} outputs a V-representation of $\D$,
          which can be used to obtain an H-representation of
          $\P$. Likewise, an H-representation of $\D$ can be obtained
          from the V-representation of $\P$ computed by {\em
            Bensolve}.
	\item A dual algorithm can be chosen, which can be
          advantageous for certain problem instances. The dual
          algorithm constructs an outer and inner approximation of
          $\D$ which corresponds, by duality, to an inner and outer
          approximation of $\P$.
	\item A duality parameter vector $c\in\R^q$ can be chosen by the
          user. The dual problem and the dual solution (but not the
          primal problem and primal solution) depend on this
          vector. It has influence on numerical issues of both the
          primal and the dual algorithm.
\end{enumerate}

To introduce the reader into the main ideas of duality for vector
linear programming (in the sense of ``geometric duality'' established
in \cite{HeyLoe08}), we begin with the following special case of
\eqref{molp}:
\begin{equation}\label{molp1}
\min Px \quad \text{ subject to }\quad a \leq B x.
\end{equation}
The dual problem to \eqref{molp1} is the following vector linear
program with ordering cone $K:=\{y \in \R^q |\; y_1 = 0, \dots,
y_{q-1}= 0, y_q \geq 0\}$ :
\begin{equation}\label{molp1_d}
	\text{$K$-maximize} \quad D(u,w) \quad { s.t. } \quad B^T u =
        P^T w, \quad u \geq 0,\quad w \geq 0,\quad e^T w = 1,
\end{equation}
where the objective function is defined as
$$ D: \R^m \times\R^q \to \R^q,\quad D(u,w) := \of{w_1,\,
  w_2,\,\dots,\,w_{q-1},\,a^T u}^T$$ and $e=(1,\dots,1)^T$ denotes the
all-one vector in $\R^q$. In this special case, the duality parameter
vector $c \in \R^q$ has been chosen as $c=e$. Later on, the constraint
$e^T w = 1$ will be replaced by $c^T w = 1$. The feasible set is
denoted by
$$ T:= \cb{(u,w) \in \R^m \times \R^q \st B^T u = P^T w, \; u \geq
  0,\; w \geq 0,\; e^T w = 1}.$$
Duality provides a relationship between the upper image $\P$ of the
primal problem \eqref{molp1} and the lower image $\D$ of the dual
problem \eqref{molp1_d}, defined as
$$ \D := D(T) - K\text{.}$$
In addition to weak and strong duality, for
details see e.g.\ \cite{Loehne11}, there is a third type of duality
relation, called {\em geometric duality}. It states that there is a
one-to-one correspondence between the proper faces of the polyhedron
$\P$ and the non-vertical (i.e.\ the last component of the
corresponding normal vector does not vanish) proper faces of the
polyhedron $\D$. The dimension of a proper face of $\P$ and the
dimension of the corresponding face of $\D$ add up to $q-1$. In
particular, facets ($(q-1)$-dimensional faces) correspond to vertices
(0-dimensional faces).\par
In order to be able to formulate duality results we consider the following
bi-affine {\em coupling function}, which was introduced in \cite{HeyLoe08}:
\begin{equation}
	\varphi(y,y^*) : = \sum_{i=1}^{q-1} y_i y^*_i + y_q
        \of{1-\sum_{i=1}^{q-1} y^*_i} - \xi(y) y^*_q,
\end{equation}
where
$$ \xi(y) = \left\{\begin{array}{l} 1 \text{ if $y$ is a point} \\ 0
\text{ if $y$ is a direction.}
\end{array}\right.$$\par

The next theorem is a consequence of the geometric duality theorem
\cite[Theorem 3.1]{HeyLoe08}. It points out the facts relevant for
users of {\sl Bensolve} and does not aim to cover the complete idea of
geometric duality. For simplicity, we assume that $\P$ has a vertex,
which corresponds exactly to the setting of the recent version of {\sl
  Bensolve} \cite{bensolve}.

\begin{theorem} \label{th1} If $\P$ has a vertex, then the following statements hold true:
	\begin{enumerate}
		\item A finite set $\bar Y$ of points and directions
                  in $\R^q$ is an irredundant V-representation of $\P$
                  if and only if
	$$ \varphi(y,y^*) \geq 0,\quad y \in \bar Y$$ is an
                  irredundant H-representation of $\D$.
	\item A finite set $\bar W$ of points combined with the
          direction $\of{0,\dots,0,-1}^T$ in $\R^q$ forms an irredundant
          V-representation of $\D$ if and only if
	$$ \varphi(y,y^*) \geq 0,\quad y^* \in \bar W$$ is an
          irredundant H-representation of $\P$.
	\end{enumerate}
\end{theorem}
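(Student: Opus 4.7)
The plan is to reduce the theorem to the geometric duality theorem of \cite{HeyLoe08} (Theorem~3.1) cited in the text, which supplies an inclusion-reversing bijection between the proper faces of $\P$ and the non-vertical proper faces of $\D$, with the dimensions of corresponding faces summing to $q-1$. In this framework, the bi-affine coupling $\varphi$ encodes incidence: $\varphi(y,y^*) \geq 0$ holds for every $y\in\P$ and $y^*\in\D$, and $\varphi(y,y^*)=0$ is precisely the pairing that identifies a face of $\P$ with its dual face in $\D$, with the indicator $\xi$ distinguishing points from directions in the primal argument.

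For part (i) I would use that an irredundant V-representation $\bar Y$ of $\P$ consists of the vertices of $\P$ together with a minimal set of extreme-direction generators of the recession cone of $\P$. Via geometric duality, every vertex $y\in\bar Y$ (with $\xi(y)=1$) is paired with a unique facet of $\D$, and this facet is exactly $\{y^* : \varphi(y,y^*)=0\}$; analogously for each extreme direction $y\in\bar Y$ (with $\xi(y)=0$). Conversely, I would verify that every facet of $\D$ is non-vertical --- this uses that $\D = D(T) - K$ contains the direction $-e_q$ in its recession cone, which forces each facet-normal to have nonzero $q$-th component --- so that every facet of $\D$ arises from exactly one element of $\bar Y$. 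The resulting bijection between V-generators of $\P$ and facets of $\D$ then transfers irredundancy in both directions.

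Part (ii) is symmetric: an irredundant V-representation of $\D$ combines its vertices with a minimal generating set of its extreme directions. Under the standing assumption that $\P$ has a vertex, I would argue that $\of{0,\dots,0,-1}^T$ is the only extreme-direction generator of $\D$, so the V-representation takes the form described in the statement. Geometric duality then pairs each vertex $y^*\in\bar W$ with a unique facet of $\P$, cut out by $\varphi(\cdot,y^*)=0$, and these exhaust the facets; the auxiliary direction $\of{0,\dots,0,-1}^T$ contributes no inequality since, being the recession direction of $\D$ coming from $-K$, it corresponds to no proper face of $\P$.

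The main obstacle is justifying the two auxiliary facts about $\D$ that underpin both bijections: every facet of $\D$ is non-vertical, and $\of{0,\dots,0,-1}^T$ is the unique extreme-direction generator of $\D$. Both follow from the explicit form $\D = D(T) - K$ with $K = \R_+ e_q$ combined with the face correspondence of geometric duality, but they require careful bookkeeping because they convert the qualitative statement of the duality theorem into the explicit enumeration of facets and directions needed for irredundant V- and H-representations.
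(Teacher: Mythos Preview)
Your overall strategy of invoking the geometric duality theorem of \cite{HeyLoe08} is the right one, and the skeleton of part {\sl (ii)} is reasonable. However, part {\sl (i)} contains a genuine error: the claim that every facet of $\D$ is non-vertical is false, and the justification you offer for it is also flawed.

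On the justification: having $-e_q$ in the recession cone of $\D$ only forces the $q$-th component of any outer normal to be \emph{nonnegative}, not nonzero. Vertical facets (normals with vanishing last component) are perfectly compatible with this recession direction.

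More importantly, $\D$ \emph{does} have vertical facets, and they arise precisely from the extreme directions of $\P$ under the coupling $\varphi$. Indeed, for a direction $y$ (so $\xi(y)=0$) one has
\[
\varphi(y,y^*) \;=\; \sum_{i=1}^{q-1}(y_i - y_q)\,y^*_i + y_q,
\]
which contains no $y^*_q$-term and hence defines a vertical halfspace. For instance, with $C=\R^q_+$ the extreme directions $e_1,\dots,e_q$ of $\P$ give exactly the simplex constraints $y^*_i\ge 0$ $(i<q)$ and $\sum_{i<q} y^*_i \le 1$ on the first $q-1$ coordinates of $\D$; these are vertical facets.

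The geometric duality theorem in \cite{HeyLoe08} sets up its bijection only between proper faces of $\P$ and the \emph{non-vertical} proper faces of $\D$. Thus vertices of $\P$ account for the non-vertical facets of $\D$, but the vertical facets of $\D$ require a separate correspondence with the extreme directions of $\P$. The paper closes this gap by citing, in addition to Corollary~3.3 of \cite{HeyLoe08}, Theorem~4.62 of \cite{Loehne11} together with the observation that the vertical facets of $\D$ are exactly the non-$K$-maximal ones. Your argument needs an analogous ingredient; without it the bijection between $\bar Y$ and the facets of $\D$ is incomplete.
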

\begin{proof}
	Statement {\sl ({\romannumeral 1})}\ follows from Corollary 3.3 in \cite{HeyLoe08}
        and Theorem 4.62 in \cite{Loehne11}. One has to take into
        account the following facts: (a) If a polyhedron $P$ has a
        vertex, then an irredundant V-representation of $P$ consists
        exactly of all vertices and all extreme directions of $P$, see
        e.g. \cite{Schrijver}; (b) Every vertex of $\P$ is weakly
        minimal (and even minimal), see e.g. \cite[Corollary
          4.67]{Loehne11}; (c) The vertical facets are exactly the
        ones which are not $K$-maximal, see e.g. \cite[Lemma
          4.60]{Loehne11}.\par
        	
	Statement {\sl ({\romannumeral 2})}\ follows from Corollary 3.2 in \cite{HeyLoe08}
        and the following facts: (d) $\D$ has a vertex, see
        \cite[Lemma 5.2]{HeyLoe08}, hence an irredundant
        V-representation of $\D$ consists of its vertices and extreme
        directions; (e) every proper face and hence any facet of $\P$
        is weakly minimal, see e.g. \cite[Lemma 5.6]{HeyLoe08}.
\end{proof}
Let us now consider the general case of \eqref{vlp}. Given a (fixed)
duality parameter $c \in \R^q$ with $c_q\neq 0$, the dual problem of \eqref{vlp} is the
following vector linear program with ordering cone $K\mathrel{\mathop:}=\{y \in \R^q
\mid y_1 = 0, \dots, y_{q-1}= 0, y_q \geq 0\}$:
\begin{equation}\label{molp_d}
	\text{$K$-maximize}\quad D(u,w,v) \quad \text{ s.t. } \quad
        B^T u = P^T w + v,\quad Y w \geq 0,\quad c^T w = 1
\end{equation}
with objective function
\begin{equation}\label{obj_d2}
	D(u,w,v) = \of{ \frac{c_q}{|c_q|} w_1,\; \frac{c_q}{|c_q|}
          w_2,\; \dots,\; \frac{c_q}{|c_q|} w_{q-1},\; d(u,v)}^T.
\end{equation}
The function $d:\R^m \times\R^n \to \R$ is defined as
\begin{equation}
		d(u, v) = a^T u^+ - b^T u^- + l^T v^- - s^T v^+,
\end{equation}
where we define $\pm \infty \cdot 0 = 0$ and
\begin{gather*}
        \alpha^{+}
        \mathrel{\mathop:}= \max(0,\alpha),\qquad \alpha^{-}:=\max(-\alpha,0)\text{,}\\
        \intertext{for $\alpha\in\R$, and}
(\alpha_1,\dots,\alpha_m)^{+} \mathrel{\mathop:}=
(\alpha_1^+,\dots,\alpha_m^+),\qquad (\alpha_1,\dots,\alpha_m)^{-} \mathrel{\mathop:}=
(\alpha_1^-,\dots,\alpha_m^-)
\end{gather*}
for $(\alpha_1,\dots,\alpha_m)\in\R^m$.
\begin{theorem} \label{th2} Let $c \in \Int C$ such that $c_q \neq 0$. Then Theorem \ref{th1} also holds for the general case of problem \eqref{vlp} and the dual problem \eqref{molp_d} if the following generalized coupling function is used:
\begin{equation}\label{phi1}
	\varphi(y,y^*) : = c_q \sum_{i=1}^{q-1} y_i y^*_i + y_q
        \of{\frac{|c_q|}{c_q} -\sum_{i=1}^{q-1} c_i y^*_i} - \xi(y)
        |c_q| y^*_q.
\end{equation}
\end{theorem}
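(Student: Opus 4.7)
The strategy is to reduce Theorem \ref{th2} to Theorem \ref{th1}, or more precisely to its extension to general pointed solid $C$ with $c_q=1$ already handled in \cite{HamLoeRud13} via \cite[Theorem 3.1]{HeyLoe08}, through two elementary reductions.

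First, I would rewrite the two-sided constraints $a\leq Bx\leq b$, $l\leq x\leq s$ in single-sided form $\tilde a \leq \tilde B x$ by stacking the row blocks $B, -B, I, -I$ and the right-hand-side blocks $a, -b, l, -s$, using the convention $0\cdot(\pm\infty)=0$ so that absent bounds drop out silently. The dual multiplier $\tilde u\geq 0$ of this enlarged system splits into four blocks $(u^+, u^-, v^-, v^+)$, and substitution turns $\tilde B^T\tilde u=P^T w$ into $B^T u = P^T w + v$ and $\tilde a^T\tilde u$ into $d(u,v) = a^T u^+ - b^T u^- + l^T v^- - s^T v^+$. Hence \eqref{molp_d} is precisely the dual of the single-sided reformulation, and it suffices to treat the case where only $a\leq Bx$ is present.

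Second, I would normalize the duality parameter $c$. If $c_q>0$, the substitutions $\hat c := c/c_q$ and $\hat w := c_q w$ give $\hat c_q=1$, $\hat c\in\Int C$, transform $c^T w=1$ into $\hat c^T\hat w=1$, and, after dividing \eqref{phi1} by the positive scalar $c_q$, turn $\varphi$ into the coupling function for $\hat c_q=1$ used in \cite{HamLoeRud13}; positive rescaling preserves both the sign of $\varphi$ and the bijection between vertices (respectively extreme directions) of the old and new dual images, as well as irredundancy of the corresponding V- and H-representations. If $c_q<0$, I would compose additionally with the reflection $R(y_1,\dots,y_q)=(y_1,\dots,y_{q-1},-y_q)$, which sends $\P, C, c$ to $R(\P), R(C), Rc$ with $(Rc)_q>0$ while preserving pointedness and solidity; the factors $|c_q|/c_q=-1$ in \eqref{obj_d2} and $|c_q|$ (in place of $c_q$) in the last term of \eqref{phi1} are exactly the coefficients produced by reflecting the $q$-th image coordinate, so the previous case applies to the reflected problem and pulls back.

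The main obstacle lies in the algebraic bookkeeping of the second step: namely, verifying that the specific coefficients $c_q$, $|c_q|/c_q$ and $|c_q|$ in \eqref{obj_d2} and \eqref{phi1} are exactly those produced by the substitution $\hat w = c_q w$ together with, when $c_q<0$, the reflection $R$, and that the distinguished vertical direction $(0,\dots,0,-1)^T$ appearing in statement (ii) is compatible with these transformations up to the positive scaling absorbed by V-representation equivalence. Once these identities are established, the face correspondence, the dimension sum $q-1$, and the irredundancy of V- and H-representations carry over along the induced affine bijections without further work.
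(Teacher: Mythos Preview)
Your proposal is correct and follows essentially the same strategy as the paper: reduce to the known $c_q=1$ case from \cite{HamLoeRud13} by (a) stacking the two-sided constraints into single-sided form, and (b) a linear change of variables that normalizes $c_q$. The differences are only in execution---you perform the two reductions in the opposite order, and your normalizing maps (rescaling $c$ and $w$ for $c_q>0$; reflecting only the $q$-th image coordinate for $c_q<0$) differ from the paper's choice (rescaling the primal coordinate $y_q$ for $c_q>0$; negating \emph{all} of $y$ for $c_q<0$)---but both sets of transformations are invertible linear maps that preserve pointedness, solidity, and the face lattice, so the bookkeeping you flag as the main obstacle goes through either way.
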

\begin{proof} The generalization of geometric duality to the case of a polyhedral convex ordering cone $C$ that does not contain lines and has nonempty interior and a duality parameter vector $c \in \Int C$ with $c _q = 1$ can be found in \cite{HamLoeRud13}. In this setting the dual problem is
\begin{equation*}
	\text{$K$-maximize} \quad D(u,w) \quad { s.t. } \quad B^T u =
        P^T w, \quad u \geq 0,\quad Y^T w \geq 0,\quad c^T w = 1
\end{equation*}
with objective function $D(u,w) \mathrel{\mathop:}= \of{w_1,\,
  w_2,\,\dots,\,w_{q-1},\,a^T u}^T$. The coupling function in this
setting is
\begin{equation}
	\varphi(y,y^*) \mathrel{\mathop:} = \sum_{i=1}^{q-1} y_i y^*_i + y_q \of{1 -
          \sum_{i=1}^{q-1} c_i y^*_i} - \xi(y) y^*_q.
\end{equation}\par

Now let us relax the assumption $c_q = 1$ by
$c_q>0$. Introducing new coordinates in the image space of the primal
problem by replacing the last component $y_q \to c_q^{-1} y_q$ (which
effects the data $Y$ and $P$ of the primal problem as well as the
duality parameter vector $c$), we obtain the dual problem
\begin{equation}
	\text{$K$-maximize} \; D(u,\bar w) \quad { s.t. } \; B^T
        u = P^T \bar w, \; u \geq 0,\; Y^T \bar w \geq 0,\;
        c^T \bar w = 1,
\end{equation}
with objective function $D(u,\bar w) \mathrel{\mathop:}= \of{\bar w_1,\,\dots,\,\bar
  w_{q-1},\,a^T u}^T$, where the dual variable $w \in \R^q$ has been
replaced by $\bar w$ with $\bar w_i \mathrel{\mathop:}= w_i$ for $i=1\dots,q-1$ and
$\bar w_q \mathrel{\mathop:}= c_q^{-1} w_q$. The coupling function is
\begin{equation}
	\varphi(y,y^*) : = \sum_{i=1}^{q-1} y_i y^*_i + c^{-1}_q y_q
        \of{1 - \sum_{i=1}^{q-1} c_i y^*_i} - \xi(y) y^*_q.
\end{equation}
Of course, the statement of Theorem \ref{th1} remains valid if the
coupling function is replaced by \eqref{phi1} and the objective
function is defined by \eqref{obj_d2} for the special case $d(u,v) =
a^T u$.
	
Let us now consider the case $c_q < 0$. We perform a coordinate
transformation $ y \to -y$ which results in a primal problem with data
$\bar P \mathrel{\mathop:}= -P$, $\bar Y \mathrel{\mathop:}= -Y$ and a duality parameter $\bar c \mathrel{\mathop:}=
-c$. Since $\bar c_q > 0$, the result is known for this case from the
first part of the proof. The dual problem is
\begin{equation*}
	\text{$K$-maximize} \quad D(u,\bar w) \quad { s.t. } \quad B^T
        u = \bar P^T \bar w, \quad u \geq 0,\quad \bar Y^T \bar w \geq
        0,\quad \bar c^T \bar w = 1
\end{equation*}
with objective function $D(u,\bar w) \mathrel{\mathop:}= \of{\bar w_1,\,\dots,\,\bar
  w_{q-1},\,a^T u}^T$. Substitution of the dual variable $\bar w$ by
$-w$ leads to
\begin{equation*}
	\text{$K$-maximize} \quad D(u, w) \quad { s.t. } \quad B^T u =
        P^T w, \quad u \geq 0,\quad Y^T w \geq 0,\quad c^T w = 1,
\end{equation*}
with objective function $D(u, w) \mathrel{\mathop:}= \of{- w_1,\, - w_2,\,\dots,\,-
  w_{q-1},\,a^T u}^T$. The coordinate transformation ($y \to -y$, $c
\to -c$) transforms the coupling function \eqref{phi1} for the case
$c_q > 0$ to \eqref{phi1} for the case $c_q < 0$.

The constraints of the general case \eqref{vlp} can be expressed as
\begin{equation}\label{constraints_3}
	\begin{pmatrix}
		B\\-B\\I\\-I
	\end{pmatrix} x \geq \begin{pmatrix}
		a\\-b\\l\\-s
	\end{pmatrix},
\end{equation}
where $I$ denotes the $n\times n$ unit matrix. This leads to the dual
constraints
$$B^T u' - B^T u'' = P^T w + v' - v'',\quad u',u'',v',v'' \geq 0,\quad
Y^T w \geq 0,\quad c^T w = 1,$$ and the last component of the
objective function $D$ is
$$ d(u,v) = a^T u' - b^T u'' + l ^T v'' - s^T v'.$$ If some components
of the right-hand side in \eqref{constraints_3} are $-\infty$, the
corresponding dual variables must vanish (i.e. they do not occur in
the dual program). This is taken into account by setting $\pm \infty
\cdot 0 = 0$.

Finally, in the constraints we set $u \mathrel{\mathop:}= u'-u''$ and $v \mathrel{\mathop:}=v'-v''$, and
in the objective function we choose $u' \mathrel{\mathop:}= u^+$, $u'' \mathrel{\mathop:}= u^-$, $v''\mathrel{\mathop:}=
v^-$, $v' \mathrel{\mathop:}= v^+$. Since $a \leq b$ and $l \leq s$ imply that $a^T u'
- b^T u'' + l ^T v'' - s^T v' \leq a^T u^+ - b^T u^- + l ^T v^- - s^T
v^+$, this specification does not influence the optimum in case of
maximization.
\end{proof}

We close this section by a short consideration of maximization
problems:
\begin{equation}\label{vlp_max}
\tag{VLP$_\text{max}$} \textstyle{\max_C} Px \quad \text{s.t.}\quad
a \leq B x \leq b,\quad l \leq x \leq s.
\end{equation}
In this case we deal with a lower image $\P \mathrel{\mathop:}= P[S] - C$ of the primal
problem and an upper image $\D\mathrel{\mathop:}= D[T] + K$ of the dual problem, which
can be stated as
\begin{equation}\label{vlp_d_max}
	\text{$K$-maximize}\quad D(u,w,v) \quad \text{ s.t. } \quad
        B^T u = P^T w + v,\quad Y w \geq 0,\quad c^T w = 1
\end{equation}
with objective function
\begin{equation}\label{eq_ddd1}
	D(u,v,w)=\of{\frac{c_q}{|c_q|} w_1,\,\dots,\,\frac{c_q}{|c_q|}
          w_{q-1},\,\bar d(u,v)}^T,
\end{equation}
where
\begin{equation}
	\bar d(u,v) = b^T u^+ - a^T u^- + s^T v^- - l^T v^+.
\end{equation}
The ordering cone is again $K\mathrel{\mathop:}=\{y \in \R^q |\; y_1 = 0, \dots,
y_{q-1}= 0, y_q \geq 0\}$.

\begin{theorem} \label{th3}
	 Let $c \in \Int C$ such that $c_q \neq 0$. Then Theorem
         \ref{th1} holds for \eqref{vlp_max} and for the dual problem
         \eqref{vlp_d_max} if the following coupling function is used:
 \begin{equation} \label{phi1_max}
 	\varphi(y,y^*) : = -c_q \sum_{i=1}^{q-1} y_i y^*_i - y_q
        \of{\frac{|c_q|}{c_q} -\sum_{i=1}^{q-1} c_i y^*_i} + \xi(y)
        |c_q| y^*_q.
 \end{equation}
 \end{theorem}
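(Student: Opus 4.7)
The natural plan is to reduce (\ref{vlp_max}) to the minimization setting already handled by Theorem~\ref{th2}. Consider the auxiliary minimization problem $\min_C \tilde P x$ subject to the same constraints $a\leq Bx\leq b$, $l\leq x\leq s$, where $\tilde P\mathrel{\mathop:}= -P$ but the ordering cone $C$ and the parameter $c\in\Int C$ (with $c_q\neq 0$) are kept fixed. Its upper image is $\tilde{\mathcal P}=-P[S]+C=-\mathcal P$, so an irredundant V-representation $\bar Y$ of $\mathcal P$ corresponds bijectively, via the reflection $y\mapsto -y$, to an irredundant V-representation $-\bar Y$ of $\tilde{\mathcal P}$ (points stay points, directions stay directions, and $\xi(-y)=\xi(y)$).

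Next I would identify the dual of the auxiliary problem with a reflected copy of $\mathcal D$. Applying Theorem~\ref{th2} gives the dual constraints $B^T u=\tilde P^T w+v=-P^T w+v$, $Yw\ge 0$, $c^T w=1$ with last objective component $d(u,v)=a^T u^+-b^T u^-+l^T v^--s^T v^+$. The substitution $(u,w,v)\mapsto(-u,w,-v)$ is a bijection between this feasible set and the feasible set $T$ of (\ref{vlp_d_max}); a direct computation using $(-\alpha)^+=\alpha^-$ shows $d(-u,-v)=-\bar d(u,v)$. Hence, writing $J\mathrel{\mathop:}=\operatorname{diag}(1,\dots,1,-1)$, one gets $\tilde D[\tilde T]=J\,D[T]$, and since $JK=-K$ one concludes $\tilde{\mathcal D}=\tilde D[\tilde T]-K=J\mathcal D$.

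With these two identifications in place, I would simply transport Theorem~\ref{th2} through the reflections. Part \textsl{(i)}: $\bar Y$ is an irredundant V-representation of $\mathcal P$ iff $-\bar Y$ is one of $\tilde{\mathcal P}$, iff (Theorem~\ref{th2}) the inequalities $\varphi_{\min}(z,y^*)\ge 0$ for $z\in -\bar Y$ are an irredundant H-representation of $\tilde{\mathcal D}=J\mathcal D$, iff after substituting $y^*\mapsto Jy^*$ the inequalities $\varphi_{\min}(-y,Jy^*)\ge 0$ for $y\in \bar Y$ are an irredundant H-representation of $\mathcal D$. A short calculation,
\begin{equation*}
\varphi_{\min}(-y,Jy^*)=-c_q\sum_{i=1}^{q-1}y_i y^*_i-y_q\!\left(\frac{|c_q|}{c_q}-\sum_{i=1}^{q-1}c_i y^*_i\right)+\xi(y)|c_q|y^*_q,
\end{equation*}
matches (\ref{phi1_max}) verbatim, closing the argument. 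Part \textsl{(ii)} is obtained symmetrically: a V-representation of $\tilde{\mathcal D}$ uses the distinguished direction $(0,\dots,0,-1)^T$, whose image under $J$ is $(0,\dots,0,1)^T$, which is the distinguished extreme direction of the upper image $\mathcal D$; then the analogous computation $\varphi_{\min}(-z,Jw)=\varphi_{\max}(z,w)$ transfers the H-representation of $\tilde{\mathcal P}$ to an H-representation of $\mathcal P$.

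The only genuinely delicate step is the bookkeeping in the dual reflection: verifying $d(-u,-v)=-\bar d(u,v)$ (which is what forces the $a\leftrightarrow b$, $l\leftrightarrow s$ swap implicit in (\ref{eq_ddd1})) and checking that the resulting sign flip in the $q$-th coordinate of the dual image space is exactly compensated by $JK=-K$, so that the upper/lower image roles swap between primal and dual just as they should. Once this sign accounting is pinned down, the rest is a mechanical transfer of Theorem~\ref{th2}.
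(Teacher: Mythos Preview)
Your proof is correct and follows essentially the same strategy as the paper: reduce the maximization problem to the minimization case already covered by Theorem~\ref{th2} via a sign change, then track how this reflection acts on the primal and dual extended images and on the coupling function.

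The only difference is cosmetic. The paper negates the \emph{cone} (replacing $\max_C$ by $\min_{-C}$, hence $Y\to -Y$, $c\to -c$) and then substitutes $(u,w,v)\to -(u,w,v)$ in the dual, whereas you negate the \emph{objective matrix} (replacing $\max_C Px$ by $\min_C(-P)x$, keeping $C$ and $c$ fixed) and substitute only $(u,v)\to(-u,-v)$. Both routes land on the same dual problem and the same coupling function; your version has the minor advantage that $c$ is untouched, so the sign bookkeeping for $c_q$ does not reappear, and your explicit identities $d(-u,-v)=-\bar d(u,v)$ and $\varphi_{\min}(-y,Jy^*)=\varphi_{\max}(y,y^*)$ make the argument more self-contained than the paper's terse sketch. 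Your remark that the distinguished direction $(0,\dots,0,-1)^T$ of $\tilde{\mathcal D}$ becomes $(0,\dots,0,1)^T$ for the upper image $\mathcal D$ is exactly the adjustment needed in part~\textsl{(ii)}.
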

\begin{proof}
	This follows from Theorem \ref{th2}. We first replace
        maximization with respect to $C$ by minimization with respect
        to $-C$. This requires the transformations $Y \to -Y$ and $c
        \to -c$. In the dual problem we substitute $(u,w,v) \to
        -(u,w,v)$. The resulting maximization problem with respect to
        $K$ is finally expressed as a minimization problem with
        respect to $K$, which refers to a coordinate transformation
        $y^*_q \to -y^*_q$.
\end{proof}

\section{A few remarks on the algorithms}
{\sl Bensolve} is an implementation of primal and dual Benson-type
algorithms. The origin of these algorithms is discussed in Section
1. The implementation is closely related to the presentation in
\cite{HamLoeRud13}, therefore, we do not present too much details
here. {\sl Bensolve} can handle problems which are generalized in
comparison to \cite{HamLoeRud13} with respect to the following two
aspects:
\begin{enumerate}
	\item The assumption $c_q = 1$ can be replaced by $c_q \neq
          0$, which allows to use arbitrary solid and line-free
          polyhedral ordering cones.
	\item Maximization is covered in addition to minimization.	
\end{enumerate}
These generalizations can be easily realized by the transformations
and the dual programs discussed in the previous section. A
modification of the algorithms is not necessary.

\section{Numerical results}
The VLP-solver \bensolve\ is a {\tt C}-implementation of the algorithms
discussed above.
In this section, we
investigate and compare numerical properties of two applications of
multiobjective optimization problems.\par
The first example (Example
\ref{ex.rttp}) is used to show the improvements that could be made in
comparison to prior implementations. The second problem class (Example
\ref{ex.csirmaz}) with image space dimension equal to ten shows that
\bensolve\ is well suited for problems with high image space
dimensions.\par
The numerical examples were run on a computer with 8GB memory and an
Intel\textregistered\ Core\texttrademark\ i5-4200 CPU with 1.60GHz
clock.  The source code of \bensolve\footnote{The source code is
available at {\tt http://bensolve.org} along with documentation and
example problems.}  was compiled with the  GNU Compiler Collection
{\tt gcc 5.2.1} and linked  against the GNU Linear Programming Kit
library {\tt libglp 4.55}.
\begin{ex}
  \label{ex.rttp}
  In their paper \cite{ShaEhr08}, Shao and Ehrgott use a variant of
  Benson's algorithm to solve MOLPs originating from a model for
  optimizing radiotherapy treatment planning.  The clinical example
  (PL) from this paper
  has three objectives,
  $1211$ constraints and $595$ variables (which are additionally
  nonnegative).  The constraint matrix is sparse with $153936$
  nonzeros.  Solving this problem exactly is not tractable.  Thus, an
  approximation variant of Benson's algorithm has been introduced
  in \cite{ShaEhr08}: The algorithm stops
  as soon as the translation of an approximation $\P^{\varepsilon}$
  of the upper image $\P$ by the duality parameter $c$ is contained in
  the upper image:
  \begin{equation*}
    \P^{\varepsilon}+\varepsilon\cdot c \subseteq \P\text{.}
  \end{equation*}\par
  This problem instance is also used for numerical tests in
  \cite{HamLoeRud13} with  a
  Matlab\textregistered-implementation preceding the current
  version of \bensolve.  The implementation used in \cite{HamLoeRud13}
  is similar to \oldsolve\footnote{\oldsolve\ is available in the
    download section at {\tt http://bensolve.org}}.  The major
  difference is the use of a certain warm start heuristic in \cite{HamLoeRud13}.
  Approximations for various error-levels $\varepsilon$ were computed
  with both the primal and dual variant of \bensolve\footnote{The
  authors thank Dr.~Lizhen~Shao for supplying the problem data.}, were we used
  the primal simplex as LP-solver in both cases.
  The run times are listed and compared with those of
  the implementation from \cite{HamLoeRud13} in Table \ref{tab.rttp_run}.  It
  can be seen that \bensolve\ performs superior to the
  preceding implementation with regard to the run times.  This might be
  explained partially by the speed-up gained through the transition
  from a Matlab\textregistered\ to a {\tt C}-based implementation or by refinements in the algorithm itself. It should be noted
  that even better results are expected as soon as a warm start
  heuristic similar to the one used in \cite{HamLoeRud13}
  is implemented.  Currently, a kind of ``indirect'' warm start technique is used,
  which is imminent in the LP-solver\footnote{GNU Linear Programming
    Kit, {\tt http://www.gnu.org/software/glpk/glpk.html}} we use for our
  implementation: between subsequent calls to the solver, the basis
  factorization of the solution found last remains in memory, hence
  speeding up the computation time due to the similar structure of the
  LP's.\par
  A further explanation for the improved run times is the
  utilization of a variant of the double description method (see e.g.\ \cite{MR1448924}), adapted
  specifically for the vertex enumeration part of \bensolve: In each iteration we are given an H-representation and a V-representation of a polyhedron $P$. We intend to compute the intersection $P\cap H$ of $P$ with an affine halfspace $H$. We assume that only a few vertices of $P$ do not belong to $H$. The classical variant requires a classification of vertices $K_1$ of $P$ belonging to $H$ and vertices $K_2$ of $P$ not belonging to $H$. The adjacency relation for every pair $(v_1,v_2)$, where $v_1 \in K_1$ and $v_2 \in K_2$, needs to be checked in order to compute the new V-representation. The new variant avoids this classification. One starts with some $v_1 \in K_1$ and checks every neighboring vertex $v_2$ whether it belongs to $K_2$ or not. If not, the procedure is repeated with $v_2$. 
  
  While approximations with an error threshold below $5\cdot 10^{-3}$
  where hardly possible with the version used in \cite{HamLoeRud13}, our new
  implementation is capable of finding approximations
  with error level $1\cdot 10^{-5}$ in a reasonable amount of
  time. Additionally, an improvement with respect to the
  approximation quality can be observed: both the primal and dual
  variants of \bensolve\ are able to achieve approximations of the
  upper image with the same approximation error $\varepsilon$ with
  fewer vertices (compare Table \ref{tab.rttp_pnts}) and by solving
  less linear programs (Table \ref{tab.rttp_lps}). This behavior can be explained by a new implementation of the vertex enumeration: If two vertices are very close to each other, they are replaced by a single one, while maintaining the outer approximation property.
  \begin{table}
    \caption{Run time comparison}
    \label{tab.rttp_run}
    \begin{center}
\begin{tabular}{@{}r@{\extracolsep{3em}}*{3}{l!{\extracolsep{0pt}}}@{}}\toprule
  & \multicolumn{3}{c}{Run times in seconds}\tabularnewline\cmidrule{2-4}
error $\varepsilon$ & \cite{HamLoeRud13} & primal & dual\tabularnewline
\midrule
$0.3$ & $47$ & $8$ & $4$ \tabularnewline
$0.1$ & $91$ & $15$ & $6$ \tabularnewline
$5\cdot 10^{-2}$ & $144$ & $22$ & $9$ \tabularnewline
$5\cdot 10^{-3}$ & $1411$ & $87$ & $36$ \tabularnewline
$5\cdot 10^{-4}$ &  & $355$ & $153$ \tabularnewline
$5\cdot 10^{-5}$ &  & $1318$ & $692$ \tabularnewline
$1\cdot 10^{-5}$ &  & $3376$ & $2185$ \tabularnewline\bottomrule
\end{tabular}
\end{center}
  \end{table}
  \begin{table}
    \caption{Number of vertices computed.}
    \label{tab.rttp_pnts}
    \begin{center}
\begin{tabular}{@{}r@{\extracolsep{3em}}*{6}{l!{\extracolsep{0pt}}}@{}}\toprule & \multicolumn{3}{c}{\# vertices of $\P$}& \multicolumn{3}{c}{\# vertices of $\D$}\tabularnewline\cmidrule(r){2-4}\cmidrule(l){5-7}
error $\varepsilon$ & \cite{HamLoeRud13} & primal & dual & \cite{HamLoeRud13} & primal & dual\tabularnewline
\midrule
$0.3$ & $46$ & $41$ & $21$ & $29$ & $29$ & $40$ \tabularnewline
$0.1$ & $104$ & $91$ & $54$ & $61$ & $60$ & $88$ \tabularnewline
$5\cdot 10^{-2}$ & $176$ & $156$ & $75$ & $94$ & $94$ & $146$ \tabularnewline
$5\cdot 10^{-3}$ & $1456$ & $1097$ & $548$ & $597$ & $595$ & $1078$ \tabularnewline
$5\cdot 10^{-4}$ &  & $7887$ & $3944$ &  & $4072$ & $7829$ \tabularnewline
$5\cdot 10^{-5}$ &  & $47293$ & $23877$ &  & $24190$ & $47310$ \tabularnewline
$1\cdot 10^{-5}$ &  & $117981$ & $65235$ &  & $65354$ & $118668$ \tabularnewline\bottomrule
\end{tabular}
\end{center}
  \end{table}
  \begin{table}
    \caption{Number of LP's solved}
    \label{tab.rttp_lps}
    \begin{center}
\begin{tabular}{@{}r@{\extracolsep{3em}}*{3}{l!{\extracolsep{0pt}}}@{}}\toprule & \multicolumn{3}{c}{\#LP's solved}\tabularnewline\cmidrule{2-4}
error $\varepsilon$ & \cite{HamLoeRud13} & primal & dual\tabularnewline
\midrule
$0.3$ & $75$ & $72$ & $66$ \tabularnewline
$0.1$ & $165$ & $157$ & $145$ \tabularnewline
$5\cdot 10^{-2}$ & $270$ & $258$ & $227$ \tabularnewline
$5\cdot 10^{-3}$ & $2053$ & $1772$ & $1710$ \tabularnewline
$5\cdot 10^{-4}$ &  & $12549$ & $12348$ \tabularnewline
$5\cdot 10^{-5}$ &  & $75656$ & $75419$ \tabularnewline
$1\cdot 10^{-5}$ &  & $194893$ & $195458$ \tabularnewline\bottomrule
\end{tabular}
\end{center}
  \end{table}
  \begin{figure}[ht]
    \centering
    \includegraphics[keepaspectratio=true,width=\textwidth]{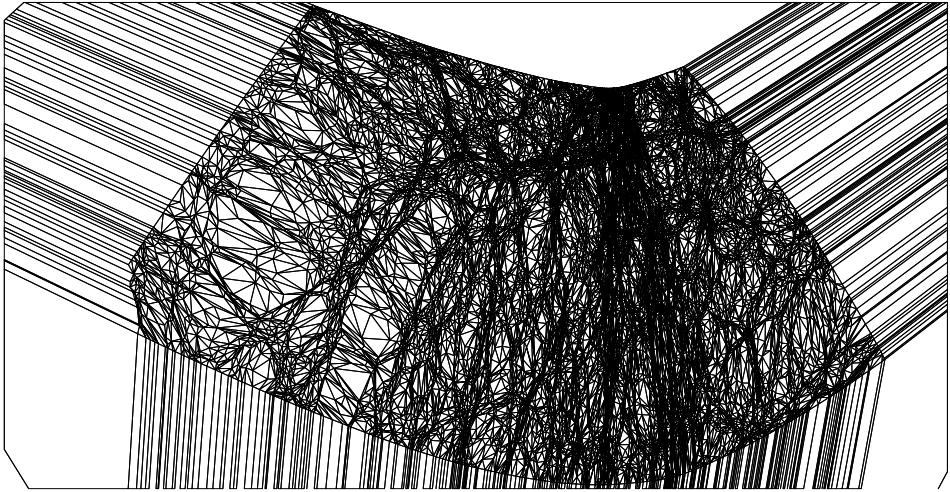}
    \caption{Approximation of the upper image of the problem instance
      (PL) from \cite{ShaEhr08} with $\varepsilon = 10^{-4}$}
  \end{figure}
\end{ex}
\begin{ex}
  \label{ex.csirmaz}
  In \cite{Csirmaz15} Csirmaz presents a rather fascinating
  application of Benson's algorithm: Exploring the entropy region
  formed by the entropies of the nonempty subsets of four random
  variables, which consists basically in finding a V-representation of
  a $10$-dimensional projection of a high dimensional polytope.
  Csirmaz used a revised version of Benson's algorithm
  to solve several instances of vector linear programs, which are
  generated by a procedure involving so called ``copy steps''.  The
  problem instances have been solved with \bensolve\footnote{The authors
  are indebted to Prof.~L\'aszl\'o Csirmaz for providing the {\tt perl}-script
  that generates the problem instances.} and we compared the
  run times with those of \cite{Csirmaz15} (compare table
  \ref{tab.csirmaz_run}).  Here we used the default options of
  \bensolve, the primal algorithm was used with an
  approximation error of $\varepsilon=10^{-8}$.  Although we used a
  machine with slightly higher specifications than the one used in \cite{Csirmaz15}, our
  implementation seems to be much faster, especially for large problem
  instances.  One explanation might be the adapted vertex enumeration
  method used in \bensolve: In \cite{Csirmaz15} it is stated that
  due to the huge number of vertices of intermediate polytopes the
  vertex enumeration (the double description method) becomes the
  bottleneck of the algorithm.
  It should be noted that the author of \cite{Csirmaz15} put a lot of
  effort in 
  reducing numerical errors to a minimum.  Therefore, we also list
  the number of vertices and facets computed by the different implementations
  in Table \ref{tab.csirmaz_pnts}.  While the number of primal
  vertices concur for all problem instances, the number of dual
  vertices (which corresponds to the number of facets of the upper image)
  differs slightly for different instances.  This may be the result of
  numerical errors or imprecision due to the approximating character
  of Benson's algorithm.
  \begin{table}
    \caption{Run time comparison of different instances of
      entropy region mapping problems}
    \label{tab.csirmaz_run}
    \begin{center}
\begin{tabular}{@{}l@{\extracolsep{3em}}*{2}{r!{\extracolsep{0pt}}}@{}}\toprule & \multicolumn{2}{c}{Run times in $hh\mathord{:}mm\mathord{:}ss$}\tabularnewline\cmidrule{2-3}
problem instance (copy string) & result of \cite{Csirmaz15} & \bensolve\tabularnewline
\midrule
$r\mathord{=}c\mathord{:}ab;s\mathord{=}r\mathord{:}ac;t\mathord{=}r\mathord{:}ad$ & $00\mathord{:}00\mathord{:}01$ & $00\mathord{:}00\mathord{:}01$ \tabularnewline
$rs\mathord{=}cd\mathord{:}ab;t\mathord{=}r\mathord{:}ad;u\mathord{=}s\mathord{:}adt$ & $00\mathord{:}06\mathord{:}19$ & $00\mathord{:}00\mathord{:}07$ \tabularnewline
$rs\mathord{=}cd\mathord{:}ab;t\mathord{=}a\mathord{:}bcs;u\mathord{=}(cs)\mathord{:}abrt$ & $00\mathord{:}06\mathord{:}51$ & $00\mathord{:}00\mathord{:}08$ \tabularnewline
$rs\mathord{=}cd\mathord{:}ab;t\mathord{=}a\mathord{:}bcs;u\mathord{=}b\mathord{:}adst$ & $00\mathord{:}17\mathord{:}40$ & $00\mathord{:}00\mathord{:}14$ \tabularnewline
$rs\mathord{=}cd\mathord{:}ab;t\mathord{=}a\mathord{:}bcs;u\mathord{=}t\mathord{:}acr$ & $00\mathord{:}18\mathord{:}27$ & $00\mathord{:}00\mathord{:}09$ \tabularnewline
$rs\mathord{=}cd\mathord{:}ab;t\mathord{=}(cr)\mathord{:}ab;u\mathord{=}t\mathord{:}acs$ & $00\mathord{:}22\mathord{:}58$ & $00\mathord{:}00\mathord{:}12$ \tabularnewline
$r\mathord{=}c\mathord{:}ab;st\mathord{=}cd\mathord{:}abr;u\mathord{=}a\mathord{:}bcrt$ & $00\mathord{:}29\mathord{:}18$ & $00\mathord{:}00\mathord{:}21$ \tabularnewline
$rs\mathord{=}cd\mathord{:}ab;t\mathord{=}a\mathord{:}bcs;u\mathord{=}c\mathord{:}abrt$ & $01\mathord{:}04\mathord{:}32$ & $00\mathord{:}00\mathord{:}49$ \tabularnewline
$rs\mathord{=}cd\mathord{:}ab;t\mathord{=}a\mathord{:}bcs;u\mathord{=}s\mathord{:}abcdt$ & $01\mathord{:}07\mathord{:}01$ & $00\mathord{:}00\mathord{:}43$ \tabularnewline
$rs\mathord{=}cd\mathord{:}ab;t\mathord{=}a\mathord{:}bcs;u\mathord{=}(at)\mathord{:}bcs$ & $01\mathord{:}39\mathord{:}30$ & $00\mathord{:}01\mathord{:}04$ \tabularnewline
$rs\mathord{=}cd\mathord{:}ab;t\mathord{=}a\mathord{:}bcs;u\mathord{=}a\mathord{:}bcst$ & $04\mathord{:}30\mathord{:}26$ & $00\mathord{:}04\mathord{:}51$ \tabularnewline
$rs\mathord{=}cd\mathord{:}ab;t\mathord{=}a\mathord{:}bcs;ua\mathord{:}bdrt$ & $05\mathord{:}11\mathord{:}25$ & $00\mathord{:}47\mathord{:}44$ \tabularnewline
$rs\mathord{=}cd\mathord{:}ab;tu\mathord{=}cr\mathord{:}ab;v\mathord{=}(cs)\mathord{:}abtu$ & $01\mathord{:}10\mathord{:}10$ & $00\mathord{:}01\mathord{:}27$ \tabularnewline
$rs\mathord{=}ad\mathord{:}bc;tu\mathord{=}ar\mathord{:}bc;v\mathord{=}r\mathord{:}abst$ & $03\mathord{:}24\mathord{:}37$ & $00\mathord{:}01\mathord{:}25$ \tabularnewline
$rs\mathord{=}cd\mathord{:}ab;t\mathord{=}(cr)\mathord{:}ab;uv\mathord{=}cs\mathord{:}abt$ & $03\mathord{:}34\mathord{:}31$ & $00\mathord{:}01\mathord{:}59$ \tabularnewline
$rs\mathord{=}cd\mathord{:}ab;tu\mathord{=}cr\mathord{:}ab;v\mathord{=}t\mathord{:}adr$ & $09\mathord{:}20\mathord{:}19$ & $00\mathord{:}02\mathord{:}21$ \tabularnewline
$rs\mathord{=}cd\mathord{:}ab;tu\mathord{=}dr\mathord{:}ab;v\mathord{=}b\mathord{:}adsu$ & $13\mathord{:}20\mathord{:}08$ & $00\mathord{:}03\mathord{:}01$ \tabularnewline
$rs\mathord{=}cd\mathord{:}ab;tv\mathord{=}dr\mathord{:}ab;u\mathord{=}a\mathord{:}bcrt$ & $14\mathord{:}34\mathord{:}42$ & $00\mathord{:}02\mathord{:}44$ \tabularnewline
$rs\mathord{=}cd\mathord{:}ab;tu\mathord{=}cs\mathord{:}ab;v\mathord{=}a\mathord{:}bcrt$ & $22\mathord{:}02\mathord{:}39$ & $00\mathord{:}02\mathord{:}42$ \tabularnewline
$rs\mathord{=}cd\mathord{:}ab;t\mathord{=}a\mathord{:}bcs;uv\mathord{=}bt\mathord{:}acr$ & $37\mathord{:}15\mathord{:}33$ & $00\mathord{:}08\mathord{:}27$ \tabularnewline
$rs\mathord{=}cd\mathord{:}ab;tu\mathord{=}cr\mathord{:}ab;v\mathord{=}a\mathord{:}bcstu$ & $427\mathord{:}43\mathord{:}30$ & $18\mathord{:}53\mathord{:}29$ \tabularnewline\bottomrule
\end{tabular}
\end{center}
  \end{table}
  \begin{table}
    \caption{Computed vertices and facets of entropy region mapping problems}
    \label{tab.csirmaz_pnts}
    \begin{center}
\begin{tabular}{@{}l@{\extracolsep{3em}}*{2}{l!{\extracolsep{0pt}}}@{}}\toprule & \multicolumn{2}{c}{\#vertices/\#facets}\tabularnewline\cmidrule{2-3}
problem instance (copy string) & result of \cite{Csirmaz15} & \bensolve\tabularnewline
\midrule
$r\mathord{=}c\mathord{:}ab;s\mathord{=}r\mathord{:}ac;t\mathord{=}r\mathord{:}ad$ & $5/20$ & $5/20$ \tabularnewline
$rs\mathord{=}cd\mathord{:}ab;t\mathord{=}r\mathord{:}ad;u\mathord{=}s\mathord{:}adt$ & $40/132$ & $40/133$ \tabularnewline
$rs\mathord{=}cd\mathord{:}ab;t\mathord{=}a\mathord{:}bcs;u\mathord{=}(cs)\mathord{:}abrt$ & $47/76$ & $47/77$ \tabularnewline
$rs\mathord{=}cd\mathord{:}ab;t\mathord{=}a\mathord{:}bcs;u\mathord{=}b\mathord{:}adst$ & $177/261$ & $177/263$ \tabularnewline
$rs\mathord{=}cd\mathord{:}ab;t\mathord{=}a\mathord{:}bcs;u\mathord{=}t\mathord{:}acr$ & $85/134$ & $85/136$ \tabularnewline
$rs\mathord{=}cd\mathord{:}ab;t\mathord{=}(cr)\mathord{:}ab;u\mathord{=}t\mathord{:}acs$ & $181/245$ & $181/247$ \tabularnewline
$r\mathord{=}c\mathord{:}ab;st\mathord{=}cd\mathord{:}abr;u\mathord{=}a\mathord{:}bcrt$ & $209/436$ & $209/438$ \tabularnewline
$rs\mathord{=}cd\mathord{:}ab;t\mathord{=}a\mathord{:}bcs;u\mathord{=}c\mathord{:}abrt$ & $363/599$ & $363/601$ \tabularnewline
$rs\mathord{=}cd\mathord{:}ab;t\mathord{=}a\mathord{:}bcs;u\mathord{=}s\mathord{:}abcdt$ & $355/591$ & $355/593$ \tabularnewline
$rs\mathord{=}cd\mathord{:}ab;t\mathord{=}a\mathord{:}bcs;u\mathord{=}(at)\mathord{:}bcs$ & $484/676$ & $484/677$ \tabularnewline
$rs\mathord{=}cd\mathord{:}ab;t\mathord{=}a\mathord{:}bcs;u\mathord{=}a\mathord{:}bcst$ & $880/1238$ & $880/1239$ \tabularnewline
$rs\mathord{=}cd\mathord{:}ab;t\mathord{=}a\mathord{:}bcs;ua\mathord{:}bdrt$ & $2506/2708$ & $2506/2710$ \tabularnewline
$rs\mathord{=}cd\mathord{:}ab;tu\mathord{=}cr\mathord{:}ab;v\mathord{=}(cs)\mathord{:}abtu$ & $19/58$ & $19/58$ \tabularnewline
$rs\mathord{=}ad\mathord{:}bc;tu\mathord{=}ar\mathord{:}bc;v\mathord{=}r\mathord{:}abst$ & $40/103$ & $40/101$ \tabularnewline
$rs\mathord{=}cd\mathord{:}ab;t\mathord{=}(cr)\mathord{:}ab;uv\mathord{=}cs\mathord{:}abt$ & $30/102$ & $30/102$ \tabularnewline
$rs\mathord{=}cd\mathord{:}ab;tu\mathord{=}cr\mathord{:}ab;v\mathord{=}t\mathord{:}adr$ & $167/235$ & $167/235$ \tabularnewline
$rs\mathord{=}cd\mathord{:}ab;tu\mathord{=}dr\mathord{:}ab;v\mathord{=}b\mathord{:}adsu$ & $318/356$ & $318/356$ \tabularnewline
$rs\mathord{=}cd\mathord{:}ab;tv\mathord{=}dr\mathord{:}ab;u\mathord{=}a\mathord{:}bcrt$ & $318/356$ & $318/356$ \tabularnewline
$rs\mathord{=}cd\mathord{:}ab;tu\mathord{=}cs\mathord{:}ab;v\mathord{=}a\mathord{:}bcrt$ & $297/648$ & $297/650$ \tabularnewline
$rs\mathord{=}cd\mathord{:}ab;t\mathord{=}a\mathord{:}bcs;uv\mathord{=}bt\mathord{:}acr$ & $779/1269$ & $779/1271$ \tabularnewline
$rs\mathord{=}cd\mathord{:}ab;tu\mathord{=}cr\mathord{:}ab;v\mathord{=}a\mathord{:}bcstu$ & $4510/7966$ & $4510/7972$ \tabularnewline\bottomrule
\end{tabular}
\end{center}
  \end{table}
\end{ex}

\clearpage
\section*{Bibliography}
\bibliographystyle{elsarticle-num}
\bibliography{database}

\end{document}